\DeclareMathOperator*{\argmin}{arg\,min}
\newcommand{\oo}{\infty}
\newcommand{\F}{\mathcal{F}}
\renewcommand{\P}{\mathcal{P}}
\newcommand{\R}{\mathcal{R}}
\newcommand{\X}{\mathcal{X}}
\newcommand{\Y}{\mathcal{Y}}
\newcommand{\Z}{\mathcal{Z}}
\newcommand{\EE}{\mathbb{E}}
\newcommand{\NN}{\mathbb{N}}
\newcommand{\PP}{\mathbb{P}}
\newcommand{\RR}{\mathbb{R}}
\newcommand{\ZZ}{\mathbb{Z}}
\newcommand{\Trm}{T_{\RR_-}}
\newcommand{\Txm}{T_{X_{-1}}}
\newcommand{\dd}{{\mathrm d}}
\newcommand{\I}{\mathds{1}}
\newcommand{\equivalent}[1]{{\underset{#1}{\sim}}}
\theoremstyle{plain}
\newtheorem{theorem}{Theorem}
\newtheorem{lemma}[theorem]{Lemma}
\newtheorem{proposition}[theorem]{Proposition}
\newtheorem{claim}[theorem]{Claim}
\newtheorem*{assumption*}{Assumption}
\newtheorem*{lemma*}{Lemma}
\newtheorem*{proposition*}{Proposition}
\newtheorem*{theorem*}{Theorem}
\theoremstyle{definition}
\newtheorem*{definition*}{Definition}
\theoremstyle{remark}
\newtheorem*{remark*}{Remark}
\date{}
\renewcommand{\geq}{\geqslant}
\renewcommand{\leq}{\leqslant}
\renewcommand{\baselinestretch}{1.15}
\definecolor{orange}{cmyk}{0,0,0.2,0}
\begin{document}

\title{
Greedy Clearing of Persistent Poissonian Dust
}

\author{
L. T. Rolla, V. Sidoravicius, L. Tournier
\\ {\small Instituto de Matem\'atica Pura e Aplicada}
\\ {\small Universit\'e Paris XIII}
}

\maketitle
\begin{abstract}
Given a Poisson point process on $\RR$, assign either one or two marks to each point of this process, independently of the others. 
We study the motion of a particle that jumps deterministically from its current location to the nearest point of the Poisson point process which still contains at least one mark, and removes one mark per each visit.
A point of the Poisson point process which is left with no marks is removed from the system.
We prove that the presence of any positive density of double marks leads to the eventual removal of every Poissonian point.
\end{abstract}

This preprint has the same numbering of sections, figures, equations and theorems as the the
published article
``\emph{Stochastic Process. Appl. 124 (2014), 3496-3506.}''

\section*{Introduction}

Given a Poisson point process  $\P \subseteq \RR$, we assign a random number~$N_x \in \{1,2,3,\dots\}$ of marks to each point $x \in \P$, independently from the values assigned to the other points in~$\P$.
Consider the following discrete motion of a particle.
It starts at the origin~$\mathbf{0}$, jumps to the nearest point $x\in \P$, and removes one mark from~$x$.
Then it carries this procedure indefinitely: always removing one mark at its current position, and targeting for its next jump the nearest~$y\in\P$ still containing a mark.

If $\PP(N_x=1)=1$, an elementary application of Borel-Cantelli lemma shows that this motion eventually chooses a random direction, and drifts away, leaving a half-line unvisited.
In this work we show that for $\PP(N_x=1)<1$ and
$N_x$ taking values on~$\{1,2\}$,
every mark and, as a consequence, every Poissonian point is eventually removed from~$\RR$.
We conjecture that the same is true assuming only~$\PP(N_x=1)<1$.

{\textit{Few words on the motivation and the background.}}
A greedy algorithm reflects the strategy of maximizing the performance on the short run.
Suppose that the task is to visit a given set of points within an infinite region, and the strategy is always to choose the nearest non-visited point.
The model  was introduced in~\cite{lima-martinez-kinouchi-01}, where it was called  the ``local traveling salesman problem'' or the ``tourist walk''.\footnotemark{}
A tourist wishes to pay a visit to every landmark, and always goes to the nearest non-visited one.
Does this strategy succeed?
The answer may depend on the dimension.

If the landmarks form a Poisson point process on $\RR$, the answer is simple.
Consider the region spanning between the leftmost non-visited Poisson point on the right of the walk  and the rightmost non-visited Poisson point on its left.  The length of this region increases with each step of the walk, and a standard application of Borel-Cantelli Lemma implies that the walk crosses it only finitely many times.
As a consequence, the walk eventually begins to move monotonically either to $+\infty$ or to $-\infty$, thus leaving infinitely many Poisson points not visited.

On~$\RR^d$, $d\geq 2$, 
it is  less clear what one should expect from the behavior of the walk. This is due to certain self-repelling mechanism which lies in the nature of the process.
\footnotetext
{
The model is reminiscent of walks in rugged landscapes or zero-temperature spin-glass dynamics.
It was advertised in~\cite{stanley-buldyrev-01} and studied in~\cite{santos-et-al-07} and~\cite{boyer-08}, and further discussed in~\cite{bordenave-foss-last-11}.
}%
To make this more explicit, we introduce the following \emph{explorer}\footnote
{
We call this process the explorer process due to its close analogy with the rancher process, introduced and studied in~\cite{angel-benjamini-virag-03}, and also in~\cite{zerner-05}.
}
process.
The explorer
starts at $S_0 = \mathbf{0}$. Sample an exponentially-distributed random variable~$A_1$, which is interpreted as the volume that the particle is capable to explore at the first step.
Let~$D_1\subseteq \RR^2$ be the unique ball centered at $S_0$ and whose volume satisfies $|D_1|=A_1$.
The new position~$S_1$ of the explorer is then sampled uniformly on $\partial D_1$.
For the second step, we sample a new exponentially-distributed random variable~$A_2$, and let $D_2$ be the unique ball centered at~$S_1$ such that $|D_2 \setminus D_1|=A_2$.
The position~$S_2$ is then sampled uniformly on $(\partial D_2) \setminus D_1$.
\begin{figure}[h!]
\centering{
\includegraphics[width=5cm]{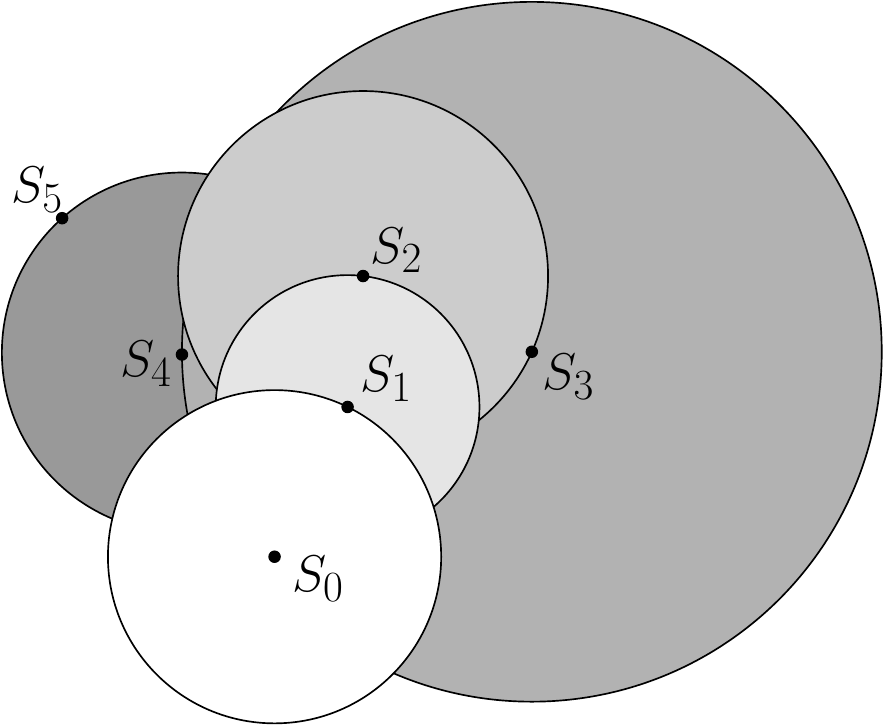}
}
\caption{\small First five steps of the explorer.} 
\label{fig:chicken}
\end{figure}
In general, given positions $S_0,\dots,S_{k-1}$, we sample a new exponentially-distributed random variable~$A_k$, and let $D_k$ be the unique ball centered at $S_{k-1}$ such that
\begin{equation}\label{1PD}
\left| \big. D_k \setminus (D_1 \cup \cdots \cup D_{k-1}) \right| = A_k,
\end{equation} 
and sample $S_k$ uniformly on 
\begin{equation}\label{2PD}
(\partial D_k) \setminus (D_1 \cup \cdots \cup D_{k-1}).
\end{equation}
The sequence~$S_n$ has the same law as the path performed by the tourist walk.
Figure~\ref{fig:chicken} illustrates the first five steps of the explorer.

Note that if requirement~\eqref{2PD} is replaced by a choice of $S_k$ uniformly on $\partial D_k$, then one may view the walk as a Brownian motion observed at random times given by~\eqref{1PD}.
Recurrence of the Brownian motion in the plane implies that every point is eventually covered by a disc $D_k$, while transience in higher dimensions implies that some regions are never explored.
Requirement~(\ref{2PD}) above forces new positions of the walk to be chosen away from the previously explored region, which is responsible for a \emph{local self-repulsion} of the process.
In the Euclidean space $\RR^d$ with $d \geqslant 3$, one naturally expects from the above description that some landmarks remain unvisited forever.
In the plane however, the question is more delicate.
Figure~\ref{fig:simul1} displays simulations of the process.

\begin{figure}[b!]
\centering{
\includegraphics[width=79mm]{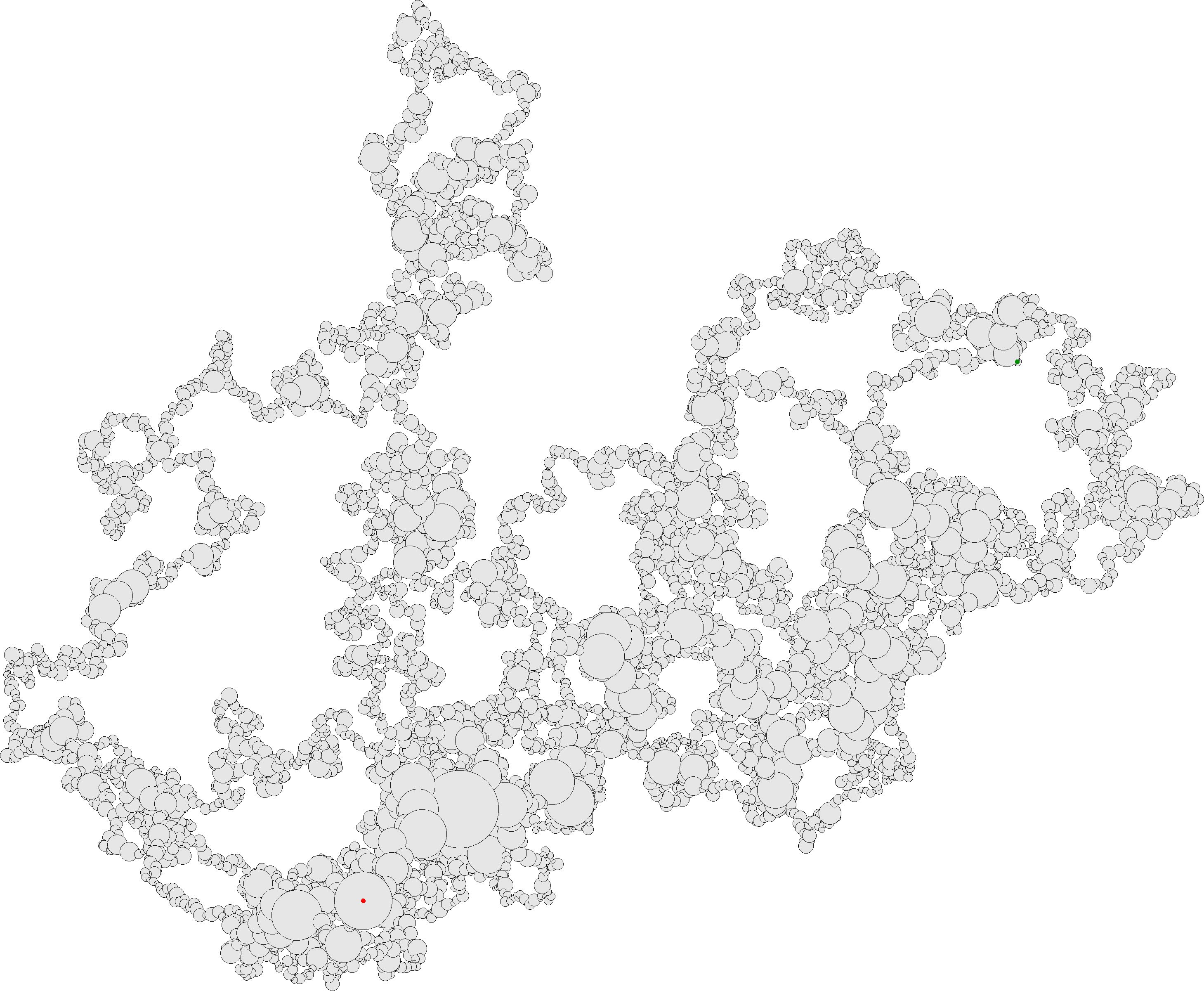}
\hfill
\includegraphics[width=79mm]{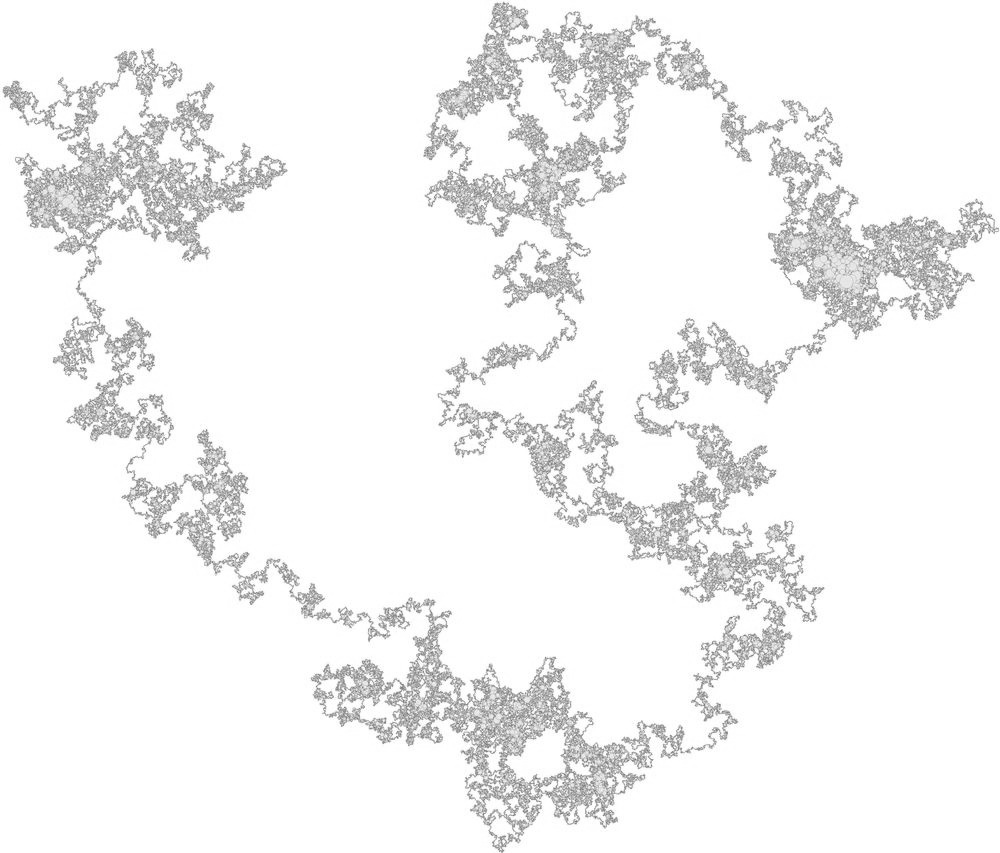}
}
\caption{\small Left: Simulation of $10^4$ steps of a greedy walk in $\RR^2$.
Discs refer to the description as the explorer process.
They are however drawn in inverse order (\textsl{i.e.}, chronologically) so as to give a clearer view of the largest discs, corresponding to largest jumps.
Right: Similar simulation with $10^6$ steps.}
\label{fig:simul1}
\end{figure}

A surprising case is that of a strip~$\RR\times[0,\varepsilon]$, which turns out to be drastically different from the line.
In numerical studies, a very intricate behavior, including heavy-tailed backtrack lengths, was noticed for strip-like rectangles~\cite{santos-et-al-07} and then in the infinite strip~\cite{boyer-08}.
This observation even suggests that any point will eventually be visited.
Such change in the behavior is caused by the following fact.
In the one-dimensional case, if particle enters the segment $[x_1, x_2]$ at position $x_1$ and leaves it at a position $x_2 > x_1$, then every Poisson point lying in $[x_1, x_2]$ is removed.
However on the strip of width $\epsilon > 0$, if the particle enters $[x_1,x_2]\times[0,\epsilon]$ at some position $(x_1,y_1)$ and leaves at some position $(x_2,y_2)$, it still may leave unvisited Poisson points in $[x_1,x_2]\times[0,\epsilon]$.
These Poissonian points that are left behind play a crucial role in reverting the direction of the walk.

In this paper we consider a variation of the model on the line $\RR$, where each point may independently, with probability $p>0$, contain two marks instead of one, so as to mimic the possibility for the walk on~$\RR\times[0,\varepsilon]$ to leave a point behind, which we think of as a persistent grain of dust.
We prove that with this modification all marks are eventually removed, for any value of~$p$.
See Figure~\ref{fig:simul3}.

\begin{figure}[b!]
\centering{
\includegraphics[width=9cm]{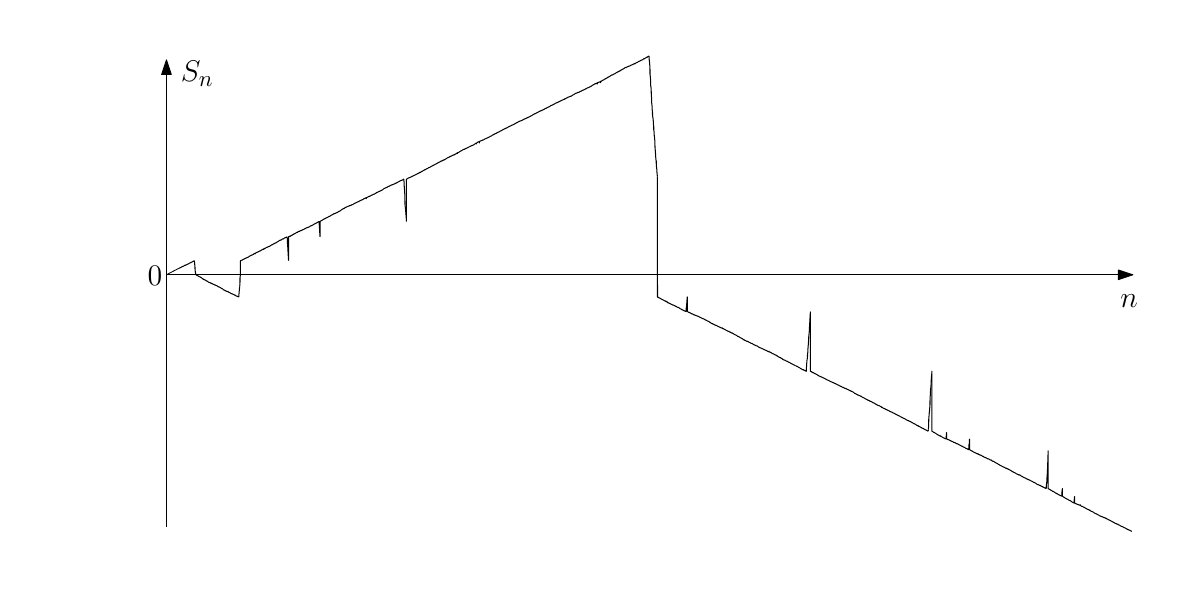}
}
\caption{\small First $4 \times 10^4$ steps of our model on the line with density $p=0.5$ of double marks among the Poisson points.
Note that the back-and-forth movement displayed here is usually not observed numerically on longer time scales (on the order of $10^7$ to $10^8$ steps for instance), where the plot strongly suggests ballisticity, in apparent conflict with our result.
This is consistent with the fact that the walk needs to build a sophisticated long-range bridge by leaving some points behind before it can jump across long gaps, hence taking extremely long time in doing so.
}
\label{fig:simul3}
\end{figure}

The heart of the proof goes through considerations of ergodic properties of the system.
It involves some reasoning by contradiction rather than a direct analysis of an apparently emerging multi-scale structure in the set of marks left behind, which enables the walk to jump across arbitrarily large gaps.
Yet, we show that the number of steps between consecutive jumps above the origin has infinite mean, and thus this model preserves part of the intricate mechanism that we expect to find on the strip.

\section{Model and Results}

Let $\X$ be a Poisson point process of intensity 1 on $\RR$, and $\Y$ be a $p$-thinning of~$\X$, for some $0<p<1$ fixed.
Informally, conditioned on~$\X$, the process $\Y$ is obtained by keeping each point of $\X$ independently with probability~$p$.
Viewing $\X$ and $\Y$ as random elements of the space $\mathcal N(\RR)$ of $\sigma$-finite, integer-valued measures on $\RR$ (corresponding to the counting measure of points), we also define the (multiple) point process
\[\omega=\X+\Y\]
that gives a weight 2 to each point in $\Y$ and 1 to each point in the rest of $\X$. We shall say that points in $\Y$ carry two \emph{marks}, while the other points in $\X$ carry one.
For a general background on point processes, the reader can see for instance~\cite{kallenberg-02}.

We will also view $\X$ and $\Y$ as random subsets and thus denote for instance ``$x\in\X$'' for ``$\X(\{x\})=1$''. For ease of reading, we frequently write $\omega(x)$ instead of $\omega(\{x\})$ to denote the number of marks at $x$ in the configuration~$\omega$. 

Let us denote by $\PP$ the underlying probability. We consider $\PP^1=\PP(\cdot\,|\,0\in\X)$ in the sense that, under $\PP^1$, $\X$ is sampled according to its Palm distribution (which amounts to adding a point at 0) and $\PP^2=\PP^1(\cdot\,|\,0\in\Y)$. Note that, under $\PP^1$, $\omega$ has two marks at 0 with probability $p$, and only one otherwise.

We now define the deterministic motion of a walk that visits the marks in $\omega$ in a greedy way.
Given $x\in\RR$, the walk starts at $S_0\in\X$ that is closest to $x$ (we may take $S_0=0$ under $\PP^1$), removes one mark from $S_0$ and moves to $S_1$, the nearest point in $\X\setminus\{S_0\}$. Then it removes one mark from $S_1$ and moves to $S_2$, the nearest point in $\X\setminus\{S_1\}$ still possessing at least one mark, and so on.

More formally, given a starting point $x\in\RR$, let $\omega_0=\omega$ and
\[S_0=\argmin\left\{ |y-x|\,:\,y\in\RR\text{ such that }y\ne x\text{ and }\omega(y)\ge1\right\}\]
be the closest point to $x$ in $\X$ (note that if $x=0$ then $S_0=0$ under $\PP^1$). Then we define the sequences $(\omega_n)_{n\ge0}$ and $(S_n)_{n\ge0}$ recursively by: for all $n\in\NN$, 
\[\omega_{n+1}=\omega_n-\delta_{S_n}\]
and
\[S_{n+1}=\argmin\left\{ |y-S_n|\,:\,y\in\RR\text{ such that }y\ne S_n\text{ and }\omega_n(y)\ge1\right\}.\]

When necessary, we will denote the sequence $(S_n)_{n\geq0}$ by $(S^{x}_n)_{n\geqslant 0}$ to make the initial $x$ explicit.
Otherwise, we assume that $S_0=0$. 
Notice that, for any $x\in\RR$, $\PP$-a.s.\ and $\PP^1$-a.s., the sequence $(S^{x}_n)_{n\geqslant 0}$ is well-defined, \textsl{i.e.}, there is always a unique $\argmin$ in the definition of $S_{n+1}$.

Let $\X_n = \{ x\in\RR:\omega_n(x)\geqslant 1\}$.
Since $(\omega_n)_n$ is decreasing, we have $\X_{n+1}\subseteq \X_n$ for all~$n$. 
Consider $\X_\oo = \bigcap_n \X_n$, the set of points left by the walk.

\begin{theorem}
\label{thm1}
$\PP^1$-a.s., $\X_\oo = \emptyset$.
\end{theorem}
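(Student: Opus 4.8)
The plan is to identify $\{\X_\oo=\emptyset\}$ with an \emph{oscillation} event for the trajectory and then to prove that this event has full probability. The starting point is the elementary but crucial \emph{no-overshoot} property: since $S_{n+1}$ is by definition the \emph{nearest} remaining mark to $S_n$, the walk can never jump across a point that still carries a mark. Consequently, whenever the walk passes from strictly one side of a location $x$ to strictly the other side, either it lands exactly on $x$ (removing one mark there) or $x$ carries no mark at that instant. Since every point of $\omega$ carries at most two marks, a location that is crossed infinitely often cannot be jumped over more than finitely many times, hence must be landed on at least twice and is therefore fully cleared.

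Next I would set up a trichotomy. The walk is $\PP^1$-a.s.\ unbounded; more precisely $|S_n|\to\infty$, because any bounded interval $I$ contains only finitely many marks and each step with $S_n\in I$ deletes one of them, so only finitely many steps lie in $I$. Since $|S_n|\to\infty$, the only limit points of $(S_n)$ are $\pm\infty$, and a.s.\ exactly one of the following holds: (O) $\limsup_n S_n=+\infty$ and $\liminf_n S_n=-\infty$; or $S_n\to+\infty$; or $S_n\to-\infty$. On (O) every location $x$ has $S_n>x$ and $S_n<x$ infinitely often, hence is crossed infinitely often, and by the previous paragraph every point is cleared, so $\X_\oo=\emptyset$. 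On $\{S_n\to+\infty\}$ the quantity $m:=\inf_n S_n$ is finite, the walk never visits $(-\infty,m)$, and the a.s.\ infinitely many marks of $\omega$ there survive forever, so $\X_\oo\ne\emptyset$; symmetrically on $\{S_n\to-\infty\}$. Thus $\{\X_\oo=\emptyset\}$ coincides $\PP^1$-a.s.\ with event (O), and it suffices to show that (O) occurs $\PP^1$-a.s., i.e.\ that the walk crosses the origin infinitely often in both directions.

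Because the law of $\omega$ under $\PP^1$ is invariant under the reflection $x\mapsto-x$ and the dynamics commute with it, the events $\{S_n\to+\infty\}$ and $\{S_n\to-\infty\}$ have equal $\PP^1$-probability, so it is enough to rule out escape to $+\infty$. I would argue by contradiction, assuming this event has positive probability, and analyze the \emph{frontier}, the sequence of newly-reached rightmost points. The mechanism to exploit is that each first visit to a point of $\Y$ leaves behind a persistent single mark, so that behind an advancing frontier one finds a population of half-cleared points. Comparing, at the frontier, the typical distance to the nearest such leftover mark on the left with the typical distance to the nearest fresh mark on the right, one expects the frontier to be pulled left and right with comparable probabilities, so the position process should behave like a centered (drift-free) one-dimensional walk and hence be recurrent --- contradicting escape to $+\infty$. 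Making this rigorous is the heart of the matter and the main obstacle: the environment seen from the particle is \emph{not} stationary, since marks are consumed as the walk proceeds, so Poincar\'e recurrence cannot be invoked directly. I would therefore convert the heuristic into a genuine contradiction through an ergodic mass-balance argument: each step removes exactly one mark, so on $\{S_n\to+\infty\}$ the number of visits to a large window $[0,L]$ equals the ambient mark count $(1+p)L$ minus the surviving leftovers, and this bookkeeping, together with reflection symmetry (no net drift across any level), should force the number of upward crossings of the origin to be infinite, while simultaneously yielding that the number of steps between consecutive such crossings has infinite mean. The hypothesis $p>0$ enters precisely here, guaranteeing a nonvanishing density of persistent leftover marks and hence a nondegenerate restoring effect.

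Once escape to $\pm\infty$ is excluded, event (O) has full $\PP^1$-probability, and the first two paragraphs give $\X_\oo=\emptyset$ $\PP^1$-a.s., proving the theorem. The delicate point throughout is that every comparison above must be controlled uniformly enough to survive the long-range dependencies generated by the walk's own history; I expect this to require constructing a suitable stationary (two-sided) version of the environment-and-walk and transferring the conclusions back to $\PP^1$ by Palm calculus, and I anticipate that the naive mass-balance may need to be supplemented by a finer ergodic estimate before the contradiction closes.
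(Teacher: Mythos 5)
Your reduction in the first two paragraphs is sound: the no-overshoot property, the fact that $|S_n|\to\infty$ because bounded intervals contain finitely many marks, and the resulting trichotomy correctly identify $\{\X_\oo=\emptyset\}$ with the oscillation event (O), so the theorem becomes equivalent to ruling out $S_n\to+\infty$ (and, by reflection, $S_n\to-\infty$). This is a legitimate repackaging of what the paper achieves by iterating its Proposition~\ref{prop:visits_left} ($\PP^1(\Txm<\infty)=1$) from one side to the other.

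The gap is that the one step carrying all the difficulty --- showing $\PP^1(S_n\to+\infty)=0$ --- is not proved; it is replaced by a heuristic (``the frontier is pulled left and right with comparable probabilities, so the position should behave like a centered walk'') together with your own admission that you do not know how to close it. The proposed mass-balance (visits to $[0,L]$ equal ambient mark count minus leftovers) and ``no net drift across any level'' do not obviously produce a contradiction: the paper itself observes that on accessible time scales the walk looks ballistic, so there is no naive drift to exploit, and reflection invariance of the law of $\omega$ says nothing about the conditional behavior of the walk given its own past. What the paper actually does is different and concrete. On the event of never returning, the points of $\Y$ first visited while moving right and never revisited form a stationary ergodic set $\Z$ of positive density; for each frontier point $y$ one defines a scale $\R_y$, dominated by a stationary functional $R_y$ of $\Z$ that is finite precisely because $\Z$ has positive density, such that a Poissonian gap of length exceeding $\R_y$ just ahead of $y$ \emph{deterministically} forces a cascade back to $\RR_-$ along the leftover marks (Lemma~\ref{lemma-gap}). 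Since $R_y<r_0$ infinitely often by ergodicity, and a gap of length $r_0$ ahead has probability $e^{-r_0}$ independently of the past, the conditional Borel--Cantelli lemma produces such a gap and hence a return, contradicting the assumption. Your write-up contains the right raw ingredient --- the persistent leftover marks behind the advancing frontier --- but supplies no mechanism converting them into a guaranteed return, so the proof is incomplete at its central point.
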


For any subset $A$ of $\RR$, define also \[T_A=\inf\{n\geq1:S_n\in A\}\]
and, for any $x\in\RR$, $T_x=T_{\{x\}}$.

\begin{theorem}
\label{thm2}
$\EE^1[\Trm] = \infty$.
\end{theorem}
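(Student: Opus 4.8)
The plan is to reduce the statement to a heavy-tail estimate for a single excursion and to obtain that estimate from the recurrence provided by Theorem~\ref{thm1}. First note a consequence of Theorem~\ref{thm1}: since every mark is eventually removed, the walk visits points arbitrarily far on both sides of $0$, hence crosses the origin infinitely often and $\Trm<\oo$ a.s. So Theorem~\ref{thm2} is a statement about the tail of $\Trm$, not about a.s.\ finiteness.

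Next I would reduce to the right-excursion maximum $M=\max_{0\le n<\Trm}S_n$. The greedy rule never lets the walk jump over a marked point (it always lands on the nearest one), so the first time the trajectory passes to the right of a point $q\in(0,M)$ that point must already be mark-free; since the walk reaches $M$ before time $\Trm$ while staying in $[0,\oo)$, every point of $\X$ in $(0,M)$ loses a mark before $\Trm$. Hence $\Trm\ge\X\big((0,M)\big)$, and as $\X((0,M))\approx M$ it suffices to prove $\EE^1[M]=\oo$, i.e.\ that $\PP^1(M\ge m)$ is not summable in $m$. The expected order of magnitude is the gambler's-ruin rate $\PP^1(M\ge m)\asymp 1/m$, whose non-summability gives the result.

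To produce the lower bound $\PP^1(M\ge m)\gtrsim 1/m$ I would pass to the ergodic picture, as the introduction suggests, and argue by contradiction. The marked process $(\X,\Y)$ is stationary and ergodic under translations, with $\PP^1$ its Palm version. Let $C_N$ count the steps $n\le N$ at which $S_{n-1},S_n$ lie on opposite sides of $0$; a cycle-stationary/Kac description of the origin-crossings would identify $\EE^1[\Trm]$ with $1/\rho$, where $\rho=\lim_N C_N/N$ is the temporal density of crossings, so that $\EE^1[\Trm]<\oo$ would force $\rho>0$. I would then contradict $\rho>0$ using the mechanism stressed in the introduction: writing $(L_N,R_N)\ni0$ for the maximal marked-point-free interval around the origin, marks are only removed and all are eventually removed, so $R_N-L_N\uparrow\oo$; after the a.s.-finite time at which a neighbourhood of $0$ is cleared, every crossing is a single jump of length at least $R_n-L_n$ across this ever-growing void. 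Together with the linear clearing rate---$N$ steps remove $N$ marks lying in a range of length $\sim N/(1+p)$---and translation invariance, one wants to conclude that such long jumps cannot recur at positive density.

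I expect this contradiction step to be the main obstacle. Quantifying the growth of the central gap and excluding positive-density long-range crossings is exactly the emerging multi-scale ``bridge-building'' phenomenon the introduction describes, so a direct estimate is delicate; the realistic route is to reuse the ergodic machinery developed for Theorem~\ref{thm1} rather than to analyse the gap growth by hand. A more elementary alternative I would keep in reserve is a direct gambler's-ruin lower bound for $M$, extracted from the absence of net drift (the walk crosses $0$ infinitely often in both directions) via a martingale or a reflection-symmetry comparison; but making either the drift control or the martingale rigorous for this dissipative, history-dependent walk is itself the crux.
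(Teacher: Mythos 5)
Your reduction steps are sound: since the walk never jumps over a marked point, every point of $\X$ in $(0,M)$ must be visited before time $\Trm$, so $\Trm\geq\X\big((0,M)\big)$ and (with a routine concentration bound for $\X((0,m))$) it suffices to show that the right-excursion maximum satisfies $\EE^1[M]=\infty$. This is in the same spirit as the paper's final step, which bounds $\Trm$ below by the number of marks in $[X_1,X_1+L_1]$. But the entire content of the theorem is the heavy-tail estimate itself, and that is exactly the step you leave open. The two routes you sketch for it do not go through as stated. The Kac-type identity $\EE^1[\Trm]=1/\rho$ requires the origin-crossings to form a time-stationary (cycle-stationary) sequence; here the dynamics depletes the environment, the law of the configuration seen from the walker at successive crossing times is not stationary in any evident sense, and $\Trm$ under $\PP^1$ is a first passage time from a Palm-distributed initial condition, not an inter-crossing time of a stationary sequence. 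Likewise the ``gambler's-ruin rate $\PP^1(M\geq m)\asymp 1/m$'' presupposes a martingale or drift structure that this history-dependent, dissipative walk does not visibly have; you acknowledge that making either rigorous ``is itself the crux,'' which is an accurate self-assessment: the proposal stops exactly where the proof has to begin.

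For comparison, the paper closes this gap without any ergodic-density or martingale argument (and without using Theorem~\ref{thm1}). It builds a genuine renewal structure on $\RR_+$: for the auxiliary configuration $\omega^+=\omega_{|(0,+\infty)}+\delta_0$ it defines $L(x)=\ell(x)+\xi(x)$, where $\ell(x)$ is the maximal overshoot to the right of $x$ before $x$ is cleared and $\xi(x)$ the next gap, and shows that the successive blocks $L_1,L_2,\dots$ are i.i.d.\ because at the hitting time of $L_0+\cdots+L_k$ no mark remains to its left. The key combinatorial observation is that, starting with a double mark at $0$ and $S_1=X_1$, any return to $0$ forces $L_{k+1}>L_0+\cdots+L_k$ for some $k$. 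If $\EE[L_1]<\infty$, the law of large numbers makes this event fail forever with positive probability, so the walk would never return to $0$ with positive probability, which by symmetry and the renewal identity would force $\PP(L_1=\infty)>0$ --- a contradiction. Hence $\EE[L_1]=\infty$, and the tail of $L_1$ transfers to the tail of $\Trm$ by the mark-counting bound you also identified. If you want to complete your proposal, you should replace the Kac/drift heuristics by some such self-referential renewal argument; as written, the proof has a genuine hole at its central step.
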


\section{Proofs}

Let us first introduce additional notation. When $0\in\X$, the points of $\X$ are labeled $(X_k)_{k\in\ZZ}$ in increasing order and so that $X_0=0$:
\[\cdots<X_{-2}<X_{-1}<X_0=0<X_1<X_2<\cdots\] 

Theorem~\ref{thm1} will follow from the proposition below.

\begin{proposition}\label{prop:visits_left}
$\PP^1(\Txm<\infty)=1$.
\end{proposition}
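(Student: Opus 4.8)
The plan is to argue by contradiction: suppose $\PP^1(\Txm = \infty) = q > 0$ and derive a contradiction using a deterministic confinement reduction followed by an ergodic/self-similarity argument. First I would establish the key deterministic fact that on the event $\{\Txm = \infty\}$ the walk is confined to $\{X_k : k \ge 0\}$. The point is that the marks at $X_{-1}$ are never removed, so they act as an impassable barrier: I prove by induction that $S_n > X_{-1}$ for all $n$. Indeed, if $S_n > X_{-1}$ and the next jump were to some marked point $y \le X_{-1}$, then among all marked points to the left of $X_{-1}$ the point $X_{-1}$ itself is the closest to $S_n$; since on $\{\Txm=\infty\}$ the site $X_{-1}$ still carries all its marks, we would get $S_{n+1} = X_{-1}$, i.e.\ $\Txm = n+1 < \infty$, a contradiction. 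As there are no points of $\X$ in $(X_{-1},0)$, the walk in fact never goes below $0$. Finally, since any bounded window contains finitely many marks and the walk removes one per step, it cannot remain in a bounded region forever, so $\sup_n S_n = +\infty$: on $\{\Txm=\infty\}$ the walk drifts to $+\infty$ while leaving the entire half-line to the left of $X_{-1}$ untouched.

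The heart of the matter is then to rule out such a persistent right-confining barrier. I would pass to the stationary law $\PP$, relating it to $\PP^1$ through the Palm/Mecke formalism, so that translation invariance and ergodicity become available. The guiding structural observation is a self-similar dichotomy: on $\{\Txm=\infty\}$, either (A) the walk eventually clears every mark in $[0,\infty)$, or (B) it leaves some leftmost doubly-marked point $z \in [0,\infty)$ with a residual mark forever; in case (B), after the last visit to $[0,z)$ the walk is confined to $[z,\infty)$ and never revisits $z$, which is a translated copy of the original barrier event. Iterating this dichotomy, positive $q$ should propagate into a positive spatial density of such persistent barriers, and the aim is to show that this is impossible.

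The engine I would use to extract the contradiction is a regeneration observation combined with translation invariance. Whenever the region strictly to the left of the current frontier has been completely cleared, the configuration ahead of the walk is fresh Poisson, so the continuation is a fresh copy of the walk; moreover, from such a fully-cleared-left state, \emph{any} leftward jump must land exactly on $X_{-1}$, because everything between the frontier and $X_{-1}$ is empty. Thus avoiding $X_{-1}$ forces the walk never to backtrack below its lowest fully-cleared frontier. Feeding this into a renewal/mass-transport balance — expected marks removed per unit length versus marks deposited and left behind, under the stationarity supplied by Palm calculus — a positive density of permanent barriers is incompatible with a single walk having one monotone leftmost extent, forcing $q = 0$.

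The hard part will be making the regeneration genuinely rigorous and turning ``positive density of barriers'' into a quantitative contradiction. The walk is strongly non-Markovian: its capacity to perform long backtracks is governed by the multiscale ``bridge'' of leftover double marks it has itself deposited, which creates exactly the long-range dependence that obstructs a naive renewal decomposition (and which, as the introduction emphasizes, is what lets the walk jump across arbitrarily large gaps). So the principal obstacles are, first, to identify regeneration times that actually sever the influence of the deposited bridge and to show they recur, and second, to convert the self-similar propagation of barriers into a clean conservation/ergodic identity whose violation yields the contradiction. I expect the mark-density bookkeeping — comparing the mean number $1+p$ of marks per point against what the confined, drifting walk can consume while keeping all backtracks above a barrier — to be the decisive and most delicate step.
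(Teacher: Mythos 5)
Your deterministic first step is correct and matches the paper's setup: on $[\Txm=\infty]$ the marks at $X_{-1}$ are never removed, so any leftward jump past $0$ would have to land on $X_{-1}$, and the walk is confined to $\X\cap[0,\infty)$ and drifts to $+\infty$. Your second idea --- that a positive probability of never returning yields, by translation covariance and ergodicity, a positive spatial density of ``persistent barrier'' points that keep a residual mark forever --- is also genuinely the right object: it is essentially the paper's set $\Z$ of points $x\in\Y$ with $S^x_n>x$ for all $n\geq1$, which is stationary, ergodic, and of density $\delta p>0$ under the contradiction hypothesis.

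The gap is in the engine. Your proposed ``mass-transport / mark-density bookkeeping'' does not produce a contradiction: a walk that drifts to $+\infty$ while leaving behind a density $\delta p$ of residual single marks violates no conservation identity --- it removes one mark per step over infinitely many steps, and nothing forces the number of marks consumed per unit length to equal $1+p$. What actually kills the bad event is geometric, not a balance equation. The paper introduces, for each $y\in\X\cap\RR_+^*$ reached by the walk before returning to $\RR_-$, an observable $\R_y$ built from the ladder $0=z_n<\dots<z_1<z_0=y$ of marks left behind in $[0,y]$, and proves the key deterministic Lemma: if the next gap to the right of $y=X_k$ exceeds $\R_{X_k}$, the walk is forced back onto the ladder and then cascades rung by rung all the way down to $0$ (each remaining rung being closer than the alternative on the right). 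The role of $\Z$ is then to dominate $\R_y$ by a stationary, a.s.\ finite quantity $R_y$ defined from $\Z$ alone --- using that points of $\Z$ are never fully cleared, hence always available as rungs --- so that $\R_y<r_0$ for infinitely many $y$; combined with the fact that a gap larger than $r_0$ then occurs with conditional probability at least $e^{-r_0}$, conditional Borel--Cantelli forces a return. Without this ``large gap forces a cascade down the ladder'' mechanism, the positive density of barriers you extract has no teeth: it is perfectly compatible, as far as counting goes, with the walk escaping to the right forever. To complete your proof you would need to formulate and prove the analogue of this cascade Lemma and the stationary domination $\R_y\leq R_y$; that is the mathematical core of the paper's argument and is absent from your proposal. (A minor further difference: the paper first reduces $\PP^1(\Txm<\infty)=1$ to $\PP^2(\Trm<\infty)=1$ by an explicit conditioning on $S_1$, whereas you work with $\Txm$ directly; that part of your setup is fine.)
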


It will often be convenient to use an equivalent conditional formulation, such as
\begin{align}
\PP^1(\Txm<\infty|X_{-1})=1,\quad\PP^1\text{-a.s.}\label{eqn:condition1}
\end{align}
Since the above event depends only on $X_{-1}$ and $\omega_{|\RR_+}$, which are independent, this shows that Proposition~\ref{prop:visits_left} would still hold for any distribution of $X_{-1}$ on $(-\infty,0)$ that is absolutely continuous with respect to the original distribution of $X_{-1}$ or thus to Lebesgue measure.
And in the same way,
\begin{align}
\PP^1\left(\Txm<\infty\big|X_{-1}, X_1, \omega(0)\right)=1,\quad\PP^1\text{-a.s.,}\label{eqn:condition2}
\end{align}
which shows that the random variable $(-X_{-1}, X_1, \omega(0))$ may have any continuous distribution on $(0,+\infty)^{2}\times\{1,2\}$ (still being independent of $\omega_{|[X_2,+\infty)}(X_2+\cdot)$) without affecting the result. 

\begin{proof}
[Proof of Theorem~\ref{thm1}]
By Proposition~\ref{prop:visits_left}, $\Txm<\infty$ $\PP^1$-almost surely, \textsl{i.e.}, the walk visits almost surely~$X_{-1}$ at the first step or after a finite excursion to the right.

For $z\in\RR$, define the operator~$\theta_z$ which provides a configuration viewed from~$z$, \textsl{i.e.}, define $\theta_z\omega$ by $\theta_z\omega(A)=\omega(A-z)$ for any Borel subset $A\subseteq\RR$.
Also define the mirroring operator~$\sigma$ by $\sigma\omega(A)=\omega(-A)$.

Let
\[X'_1=\min\big(\X_{\Txm}\cap\RR_+\big).\]
By the time $\Txm$, the walk $S$ has stayed strictly on the right of $X_{-1}$, hence the restriction of $\theta_{X_{-1}}\omega_{\Txm}$ to $\RR_-$ is equal to the restriction of $\theta_{X_{-1}}\omega$ to $\RR_-$.
Moreover, $\Txm$ and $X_1,X_2,\dots,X_{\Txm}$ can be defined in terms of $\omega_{|\RR_+}$ and $X_{-1}$, and do not depend on $(\theta_{X_{-1}}\omega)_{|\RR_-}$.
Therefore, by the strong Markov property of $\omega$, the conditional distribution of $(\theta_{X_{-1}}\omega_{\Txm})_{|\RR_-}$ given $\Txm,X_1,X_2,\dots,X_{\Txm}$ is the same as the initial distribution of~$\omega_{|\RR_-}$. 

By~\eqref{eqn:condition1} applied to $\sigma\theta_{X_{-1}}\omega$, the walk must make a finite excursion to the left after $\Txm$ and eventually reach~$X'_1$.
At the time $T'$ of this visit to $X'_1$, the walk has stayed on the left of $X'_1$, but may have already visited $X'_1$ before $\Txm$.
As argued above, the conditional distribution of $(\theta_{X'_1}\omega_{T'})_{|\RR_+}$ is the same as the initial distribution of $\omega_{|\RR_+}$, \emph{except} possibly for the value of $\omega(X'_1)$ and a finite gap on the right of $X'_1$ (this happens if $X'_1$ is in $\Y$ and was visited before $X_{-1}$).
By~\eqref{eqn:condition2}, the proposition still applies to $\theta_{X'_1}\omega_{T'}$ and proves that the walks jumps over $0$ again after $T'$.
We then iterate the procedure, switching from one side to the other.
Since the previous arguments based on~\eqref{eqn:condition2} apply at each time, we conclude that every mark is eventually removed.
\end{proof}

We now need to prove Proposition~\ref{prop:visits_left}.

\begin{claim}
If $\PP^2(\Trm<\infty)=1$, then $\PP^1(\Txm<\infty)=1$.
\end{claim}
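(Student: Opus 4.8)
The plan is to prove the Claim by reducing everything to entering $\RR_-$, splitting on the mark at the origin, and then following the single-mark walk's first rightward excursion explicitly until it meets a double mark, where the hypothesis can be restarted. First I would record the reduction to $\RR_-$. Before time $\Trm$ the walk stays in $[0,+\infty)$ and all of $\X\cap\RR_-$ still carries its marks, so the nearest marked point of $\RR_-$ seen from any position $\geq 0$ is the rightmost one, namely $X_{-1}$; hence $\Trm=\Txm$ on $\{\Trm<\infty\}$ and $\{\Trm<\infty\}=\{\Txm<\infty\}$ under each of our measures, so it suffices to prove $\PP^1(\Trm<\infty)=1$. Conditioning on $\omega(0)$ gives $\PP^1(\Trm<\infty)=p\,\PP^2(\Trm<\infty)+(1-p)\,\PP^1(\Trm<\infty\mid\omega(0)=1)$, and since the first term is $p$ by hypothesis, the whole problem comes down to showing $\PP^1(\Trm<\infty\mid\omega(0)=1)=1$.

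Next I would analyze the walk under $\PP^1(\cdot\mid\omega(0)=1)$ directly. It removes the single mark at $0$ and jumps to the nearer of $X_{-1}$ and $X_1$; if this is $X_{-1}$ we are done with $\Trm=1$, so assume it is $X_1$. I claim the walk then performs a strictly rightward march $X_1,X_2,\dots$ until it either turns left or meets a double mark. Indeed, by induction, when the walk first reaches $X_j$ it has never been strictly to the right of $X_j$, while $0,X_1,\dots,X_{j-1}$ have each been visited once and, the march having passed only through single-marked points, are now empty, so the interval $(X_{-1},X_j)$ carries no marks and $X_{-1}$ is intact. If $\omega(X_j)=2$ I stop the march; otherwise the walk removes the single mark at $X_j$ and jumps to the nearer of $X_{-1}$ (at distance $X_j-X_{-1}$) and $X_{j+1}$. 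Choosing $X_{-1}$ puts the walk in $\RR_-$ and finishes the argument, while choosing $X_{j+1}$ restores the induction hypothesis. Since $\omega(X_1),\omega(X_2),\dots$ are i.i.d.\ and equal $2$ with probability $p$, the march almost surely reaches a first double-marked point $X_K$ (unless it has already turned left).

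Finally I would restart at $X_K$. There the walk sits at $X_K$ with $\omega(X_K)=2$, its nearest left marked point is $X_{-1}$ at distance $X_K-X_{-1}$, and—crucially, the walk never having explored $(X_K,+\infty)$—the strong Markov property of $\omega$ makes $\omega_{|(X_K,+\infty)}$ a fresh unit-rate Poisson process with independent double marks, independent of the history and hence of the effective left-gap $X_K-X_{-1}$. Viewed from $X_K$ this is exactly a $\PP^2$-type configuration with an atypical but absolutely continuous left-gap. Now $\PP^2(\Trm<\infty)=1$ upgrades, by the independence underlying \eqref{eqn:condition1}, to $\PP^2(\Trm<\infty\mid X_{-1})=1$ almost surely, i.e.\ to the statement that for Lebesgue-almost every left-gap the double-marked walk almost surely enters the region to its left; applied to the restarted instance this shows the walk almost surely reaches $X_{-1}\in\RR_-$, giving $\PP^1(\Trm<\infty\mid\omega(0)=1)=1$ and proving the Claim. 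The main obstacle is precisely this last step: one must check that the restart at $X_K$ presents a genuine $\PP^2$ instance—fresh Poissonian environment on the right together with an absolutely continuous, independent left-gap—so that the robust form of the hypothesis applies verbatim, and it is exactly the monotonicity of the march that keeps the right environment undisturbed and makes this possible.
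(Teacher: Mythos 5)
Your analysis of the walk under $\PP^1(\cdot\mid\omega(0)=1)$ --- the deterministic rightward march over singly-marked points until the first doubly-marked point $X_K$, and the restart there with a fresh right environment and an absolutely continuous, independent left gap --- is correct and is a sound idea. The proof nevertheless has a genuine gap: you conflate $\Trm$ with $\Txm$. In this paper $0\in\RR_-$ (this is forced both by the proof of Lemma~\ref{lemma-gap}, which concludes $\Trm<T_{X_{k+1}}$ by showing that the walk reaches the point $0$ while $0$ still carries a mark, and by the identity $\PP^2(\Trm<\infty)=\PP^2(S_1<0)+\PP^2(S_1>0,\,S\mbox{ visits }0)$ in the paper's own argument). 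Under $\PP^2$ the origin keeps one mark after step $0$ and, from any position $x>0$, is strictly closer than $X_{-1}$; hence on $[S_1>0]$ the first entry into $\RR_-$ is necessarily a \emph{return to the origin}, not a visit to $X_{-1}$. Your assertion that $\{\Trm<\infty\}=\{\Txm<\infty\}$ under each measure therefore fails exactly on the event $[\omega(0)=2]$: the hypothesis $\PP^2(\Trm<\infty)=1$ only says that the doubly-marked walk almost surely comes back to collect its second mark at $0$; it does not say that it ever reaches $X_{-1}$. This breaks the argument in two places: (i) in your initial decomposition, the term $p\,\PP^2(\Trm<\infty)$ does not control $\PP^1(\Txm<\infty,\,\omega(0)=2)$; and (ii) at the restart at $X_K$, the robust form $\PP^2(\Trm<\infty\mid X_{-1})=1$ only guarantees that the relaunched walk returns to $X_K$ (whose second mark is still there), after which the configuration to the right is a non-fresh leftover environment to which no hypothesis applies --- it does not guarantee that the walk reaches $X_{-1}$.

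The paper closes precisely this gap by a one-step identity in the opposite direction: under $\PP^2$, after the first step to $X_1=z$ the process $(S_{n+1},\omega_{n+1})_{n\geq0}$ coincides with the walk launched at $z$ in the configuration $\omega-\delta_0$, so the event ``return to $0$'' for the $\PP^2$-walk \emph{is}, after translating by $X_1$, the event $[\Txm<\infty]$ for a $\PP^1$-walk with $X_{-1}=-z$ and $\omega(X_{-1})=1$; the conditioning on $\omega(X_{-1})=1$ is then dropped because $\Txm$ does not depend on the number of marks at $X_{-1}$. In other words, the leftover mark at the origin is made to play the role of $X_{-1}$ for the shifted walk, which is exactly how a statement about returning to $0$ gets converted into a statement about reaching the left neighbour. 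To repair your proof you would need to replace both of your uses of the hypothesis by an argument of this kind; as written, your argument in effect assumes the stronger statement $\PP^2(\Txm<\infty)=1$, which is not what is supplied later by Lemma~\ref{lemma-gap} and Claim~\ref{cl:r0}.
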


\begin{proof}
We relate the above probabilities via the following decomposition:
\begin{align*}
\PP^2(\Trm<\infty)
& =
\PP^2(S_1<0)+\PP^2(S_1>0,\, S \mbox{ visits } 0)
\\
& =
\frac{1}{2}+\EE^2 \left[\ \PP^2(S_1>0, S \mbox{ visits } 0\,|\,S_1)\right]
\\
& = 
\frac{1}{2}+
\int_{\RR_+} \PP^2(\, S \mbox{ visits } 0\, |\, S_1=z\, ) \, 
e^{-2z}
\dd z
\\ 
& =
\frac{1}{2}+
\int_{\RR_+} \PP^1(\, S^z \mbox{ visits } 0  \, |\,
S_1=z,\, \omega(0)=1\, )
\, e^{-2z} \dd z
\\
& =
\frac{1}{2}+
\int_{\RR_+} \PP^1(\, \Txm<\oo \, |\,
X_{-1}=-z,\, \omega(-z)=1\, )
\, e^{-2z} \dd z
\\
& =
\frac{1}{2}+
\int_{\RR_+} \PP^1(\, \Txm<\oo \, |\,
X_{-1}=-z\, )
\,  e^{-2z} \dd z
.
\end{align*}
If the left-hand side equals~$1$, so does the probability in the last line, for Lebesgue-a.e.\ positive $z$, and therefore the claim follows by integrating with respect to the distribution of $X_{-1}$ under $\PP^1$.
The third equality holds because the distribution of $S_1$ is Laplace of parameter 2 (indeed the law of $S_1$ is symmetric and $|S_1|$ is the minimum of two independent exponential random variables of parameter 1, \textsl{i.e.}~it is an exponential random variable of parameter 2).
The fourth equality is due to the property that $(S^0_{n+1},\omega_{n+1})_{n\geq 0}$ is equal to $(S^z_n,\omega'_n)_{n\geq0}$ for $\omega'=\omega-\delta_0$. The fifth equality follows from the invariance of~$\PP^1$ with respect to the random translation by ${X_1}$.
\end{proof}

By the above claim, proving the proposition reduces to proving
\begin{equation}
\label{eq:istaken}\tag{$\ast$}
\PP^2(\Trm=\infty) = 0
.
\end{equation}
In the sequel we define an observable $\R_y$ which in a way measures how difficult it is for the walk to return to $0$ once it arrives at a given position $y>0$.
The idea is that $\R_y$ is the size of the gap that the walk needs in order to ensure that~$0$ will be visited before it goes further right, see Lemma~\ref{lemma-gap}.

Let $y\in\X\cap \RR_+^*$.
If $\Trm<T_y$, we take $\R_y=0$.
Otherwise we have  $0<S_1,S_2,\dots,S_{T_y-1} < y = S_{T_y}$ and in this case we label the set $\X_{T_y}\cap[0,y]$ of remaining marks in decreasing order:
\[
0 = z_n < z_{n-1} < \cdots < z_{1} < z_0=y
,
\]
and define
\begin{align*}
\R_y &= \max\{y-z_j : y-z_{j} > 2 (y-z_{j-1}),\, j = 1,2,\dots,n \}
\\
&= y - \min\{z_j:z_{j-1}-z_j > y-z_{j-1},\, j = 1,2,\dots,n\}.
\end{align*}
Since $y-z_1$ is always in the above set, we have $0 < \R_y \leqslant y$.
Figure~\ref{fig:Ry} illustrates the definition of~$\R_y$.
\begin{lemma}
\label{lemma-gap}
If $X_{k+1}-X_k > \R_{X_k}$ and $0 \in \Y$, then $\Trm < T_{X_{k+1}}$ and in particular $\Trm < \infty$.
\end{lemma}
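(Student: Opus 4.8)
The plan is to follow the trajectory from time $T_{X_k}$ onward and show that the gap condition $X_{k+1}-X_k>\R_{X_k}$ prevents the walk from ever jumping across the empty interval $(X_k,X_{k+1})$ before it has swept back across the origin. First I would dispose of the degenerate case $\Trm<T_{X_k}$: here $\R_{X_k}=0$ by definition, and since no point of $(X_k,X_{k+1})$ carries a mark, the walk can only reach $X_{k+1}$ from $X_k$ itself or after $X_k$ has been emptied, so in either case $T_{X_k}\le T_{X_{k+1}}$ and hence $\Trm<T_{X_k}\le T_{X_{k+1}}$. This leaves the case $\Trm\ge T_{X_k}$, in which $\R_{X_k}=\R_y$ is defined through the remaining marks $0=z_n<\cdots<z_0=y$ (writing $y=X_k$). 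It is convenient to restate the defining condition in terms of the gaps $g_j=z_{j-1}-z_j$: since $y-z_{j-1}=g_1+\cdots+g_{j-1}$, the inequality $z_{j-1}-z_j>y-z_{j-1}$ becomes $g_j>g_1+\cdots+g_{j-1}$, so that $\R_y=g_1+\cdots+g_{j^*}=y-z_{j^*}$, where $j^*$ is the largest index at which a gap exceeds the sum of all gaps lying closer to $y$.

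The core of the argument is to show that, for every time $m$ with $T_y\le m<\min(\Trm,T_{X_{k+1}})$, the walk stays in $[0,y]$ and cannot leave it on the right. The only way to reach $X_{k+1}$ is by a direct jump from some $s\le y$ for which $X_{k+1}$ is the nearest mark, which in particular forces $X_{k+1}-s$ to be smaller than the distance from $s$ to its nearest surviving mark on the left. I would prove, by induction along the dominant-gap decomposition, that this never occurs: the walk first empties the cluster $[z_{j^*-1},y]$ lying to the right of the dominant gap and ends up at its left end $z_{j^*-1}$, from where the nearest mark on the left is $z_{j^*}$ at distance $g_{j^*}$, while $X_{k+1}-z_{j^*-1}\ge X_{k+1}-y>\R_y>g_{j^*}$; thus the walk crosses the dominant gap leftward instead of escaping. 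Once at $z_{j^*}$, the absence of any further dominant gap (so $g_j\le g_1+\cdots+g_{j-1}\le\R_y$ for $j>j^*$) guarantees that every subsequent leftward target remains closer than $X_{k+1}$, and the sweep continues cluster by cluster down to $z_n=0$.

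The hypothesis $0\in\Y$ enters at the end: it guarantees that $0=z_n$ still carries a mark at time $T_y$, hence remains a genuine target of the return sweep, so that the walk actually reaches the origin rather than jumping over an already-emptied $0$; once it sits at $0$ with $[0,y]$ cleared, I would argue that it must step into $\RR_-$, yielding $\Trm<T_{X_{k+1}}$. I expect two points to be the real obstacles. The first is to make the leftward ``sweep'' rigorous: because the persistent points of $\Y$ are only half-emptied on the first pass, the walk does not move monotonically but performs local back-and-forth excursions to remove their second mark, and one must verify that each excursion stays within the cluster it started in and never produces the forbidden jump to $X_{k+1}$ — this is precisely what the factor $2$ in the definition of $\R_y$ is tuned to control. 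The second, more delicate, point is the final crossing itself: establishing that, after $[0,y]$ has been cleared, the walk genuinely prefers the negative side to $X_{k+1}$, which is where the calibration of $\R_y$ against the geometry left of the origin must be exploited.
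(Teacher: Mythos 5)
Your overall strategy---block any jump to $X_{k+1}$ until the dominant gap has been crossed, then sweep monotonically down to $0$---is the same as the paper's, and your degenerate case and your gap reformulation of $\R_y$ are fine. But the two points you flag as ``real obstacles'' are precisely where your writeup is incomplete, and both are resolved by observations you are missing rather than by the calibration arguments you anticipate. The key fact is that at time $T_y$ every surviving point of $\X_{T_y}\cap[0,y)$ carries \emph{exactly one} mark: the walk never jumps over a marked point, so any point of $(0,y)$ still marked at time $T_y$ was already visited once, hence lies in $\Y$ and has a single mark left, and $0$ itself has a single mark left because $0\in\Y$. Consequently the only point of $[0,y]$ that can be visited twice after $T_y$ is $y$, the walk visits $z_1,z_2,\dots$ in order of index with at most one detour back to $y$, and there are no ``local excursions to remove second marks'' to control. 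Without this, your claim that the walk ``empties the cluster $[z_{j^*-1},y]$ and ends up at its left end'' is unsupported. (The paper organizes the endgame slightly differently and more economically: while $z_{j^*}$ is still marked, no jump to $X_{k+1}$ from any $s\in[z_{j^*},X_k]$ is possible, since $s-z_{j^*}\le\R_{X_k}<X_{k+1}-X_k\le X_{k+1}-s$; hence the walk must revisit $z_{j^*}$ first, and at that moment $(z_{j^*},X_k)$ is automatically empty, so one never needs to analyse the internal structure of the cluster.)

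Your second ``more delicate point'' is a non-issue: in this paper $0\in\RR_-$ (this is how $\Trm$ is used throughout, e.g.\ in the identity $\PP^2(\Trm<\infty)=\PP^2(S_1<0)+\PP^2(S_1>0,\ S \mbox{ visits } 0)$), so the sweep landing on $z_n=0$ at a positive time already gives $\Trm<T_{X_{k+1}}$; no comparison between the negative side and $X_{k+1}$ is needed. The hypothesis $0\in\Y$ serves only to guarantee that $0$ is still a marked target at time $T_y$, so that the sweep terminates at $0$ rather than at some positive point from which the walk could turn right. Finally, a slip in your inequality chain: for $j>j^*$ one has $g_1+\cdots+g_{j-1}=y-z_{j-1}\ge\R_y$, not $\le\R_y$; the comparison you actually need is $g_j\le y-z_{j-1}<X_{k+1}-z_{j-1}$, which is what non-dominance of the indices $j>j^*$ provides.
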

\begin{figure}[htb!]
\centering
\includegraphics[width=140mm]{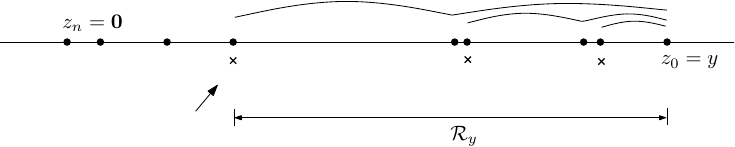}
\caption{Construction of~$\R_y$ from configuration $\X_{T_y}\cap[0,y]$.
There are $8$ points of $\X_{T_y}$ lying on $[0,y)$, so $n=8$ in this example.
Crosses indicate all the points which are farther from their next points than the next points are from~$y$, there are~$3$ such points in this example.
An arrow indicates the leftmost such point, and~$\R_y$ is its distance to~$y$.}
\label{fig:Ry}
\end{figure}

\begin{proof}
Assume that $A_k=[ X_{k+1}-X_k > \R_{X_k} ]$ occurs.
If $T_{X_k}=\infty$, then necessarily $\Trm<\infty$ and $T_{X_{k+1}} \geq T_{X_k} = \infty$.
We may thus assume $T_{X_k}<\infty$.
Denote as before the configuration $\X_{T_{X_k}}\cap[0,X_k]$ by
\[0=z_n<z_{n-1}<\cdots<z_1<z_0=X_k\]
assuming also $z_{-1}=X_{k+1}$.  
Note that exactly $1$ mark remains at $z_n,\ldots,z_1$ in the configuration $\omega_{T_{X_k}}$ at time $T_{X_k}$.
Denote by $l\geq 0$ the index such that $\R_{X_k}=X_k-z_l$.
Since, on $A_k$, $X_k-z_l=\R_{X_k}<X_{k+1}-X_k$, the walk must visit~$z_l$ (again) before it visits $X_{k+1}$ (and may, or may not remove $X_k$ completely if $X_k\in\Y$).
At this time $m'>m$ when $S_{m'}=z_l$, we have $\X_{m'}\cap(z_l,X_k)=\emptyset$, hence the nearest point to $S_{m'}$ on the right is at least as far as $X_k$.
From this instant on, the definition of $\R_{X_k}$ gives first that $S_{m'+1}=z_{l+1}$ (indeed, since $X_k-z_{l+1}<2(X_k-z_l)$, we have $z_l-z_{l+1}<X_k-z_l$), then that $S_{m'+2}=z_{l+2}$, and so on, until $S$ reaches $0$.
This concludes the proof of the lemma. 
\end{proof}

\begin{claim}
\label{cl:r0}
There exists $r_0 < \infty$ such that, $\PP^2$-a.s., $\R_x < r_0$ for infinitely many $x\in\X \cap \RR_+^*$.
\end{claim}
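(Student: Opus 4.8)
The plan is to separate the two regimes $\{\Trm<\infty\}$ and $\{\Trm=\infty\}$. On $\{\Trm<\infty\}$ the statement is immediate: only finitely many steps occur before the walk first enters $\RR_-$, so all but finitely many points $x\in\X\cap\RR_+^*$ satisfy $\Trm<T_x$, whence $\R_x=0<r_0$ for every $r_0>0$. Thus the whole content lies on $\{\Trm=\infty\}$, where the walk escapes to $+\infty$ and, staying nonnegative while exhausting the marks it reaches, must visit every $X_k$, $k\ge1$, in finite time. On this event, writing $G_k=X_{k+1}-X_k$ and using that $0\in\Y$ under $\PP^2$, Lemma~\ref{lemma-gap} already forces $G_k\le\R_{X_k}$ for all $k$; so $\R_{X_k}$ cannot be systematically small, and the task is to show it nonetheless dips below a fixed level infinitely often.

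The target I would aim for is that $\PP^2$-a.s.\ on $\{\Trm=\infty\}$ one has $\liminf_k \R_{X_k}<\infty$. Granting this, a deterministic $r_0$ is extracted as follows. The events $E_{r_0}=\{\R_{X_k}<r_0\text{ for infinitely many }k\}$ increase with $r_0$, and their union is $\{\liminf_k\R_{X_k}<\infty\}$, which then has full probability (it holds trivially, with $\liminf=0$, on $\{\Trm<\infty\}$). If each $E_{r_0}$ has probability $0$ or $1$, then, increasing to probability $1$, some $E_{r_0}$ equals $1$, which is exactly the claim. The $0$--$1$ dichotomy I would obtain from shift-invariance: in the translation-covariant picture under the stationary (non-Palm) law, where the walk started from the point of $\X$ nearest to a location $t$ is a translation-covariant functional of $(\X,\Y)$, the event $E_{r_0}$ is invariant under the ergodic shift along $\X$, hence trivial.

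To establish $\liminf_k\R_{X_k}<\infty$ I would argue by contradiction, assuming $\R_{X_k}\to\infty$. Since the index $j=1$ always contributes to the maximum defining $\R$, one has $\R_{X_k}\ge X_k-z_1^{(k)}$, where $z_1^{(k)}$ is the nearest remaining mark to the left of $X_k$ at time $T_{X_k}$; hence $\R_{X_k}\to\infty$ would mean that, as the walk attains each new record $X_k$, an interval of length tending to infinity immediately to its left has been entirely cleared of marks, dust included. I would then convert this into a statement on the density of the residual configuration and derive a contradiction through a mass-conservation estimate: the residual marks form a subset of $\Y$, of density at most $p<1$, yet building the ever-larger cleared buffers and the long-range bridges recorded by $\R_{X_k}\to\infty$ would require the walk to remove, per unit length at the scale of the origin, more mass than is available. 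Concretely I would run Birkhoff's theorem for the environment seen from the particle, or equivalently a mass-transport identity that sends each record $X_k$ to the far end of its governing gap at distance $\R_{X_k}$ and compares expected out- and in-masses.

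The main obstacle, and the reason a direct computation will not suffice, is the absence of a clean stationary or regenerative structure: $\R_{X_k}$ depends on the entire history of the walk on $[0,X_k]$, and the dust left behind couples the dynamics across arbitrarily long ranges, so $(\R_{X_k})_k$ is neither stationary nor Markov. The crux is therefore to set up the correct translation-covariant framework, namely the environment viewed from the walk under the stationary law, in which the relevant functionals become stationary and ergodic, so that both the $0$--$1$ law and the mass-conservation contradiction can be carried out. I expect this step, rather than the geometric bookkeeping around the definition of $\R_x$, to carry the real difficulty.
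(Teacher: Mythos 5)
Your reduction to the event $[\Trm=\infty]$, and the observation that the claim is trivial on its complement, match the paper's first step. But the two pillars you propose for the main case both have genuine gaps, and the mechanism the paper actually uses --- which your plan does not contain --- is what is needed to close them. First, the $0$--$1$ law: the event $E_{r_0}=[\R_{X_k}<r_0 \text{ for infinitely many }k]$ is \emph{not} invariant under the shift along $\X$, because $\R_y$ is computed from the residual configuration $\X_{T_y}$, which depends on the entire trajectory of the walk started at $0$; shifting the environment moves the starting point and changes every $\X_{T_y}$, so there is no ergodic triviality to invoke for $E_{r_0}$ itself. Second, your route to $\liminf_k\R_{X_k}<\infty$ starts from a misreading of the definition: $\R_{X_k}\ge X_k-z_1^{(k)}$ is a \emph{lower} bound, so $\R_{X_k}\to\infty$ does not force a long cleared interval immediately to the left of $X_k$ --- the maximum defining $\R_y$ may be attained at an index $j$ far from $y$, with the region adjacent to $y$ still densely occupied. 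The proposed ``mass-conservation'' contradiction therefore has no concrete starting point, and a density count alone does not preclude the residual set from exhibiting arbitrarily large doubling gaps relative to each record $X_k$.

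The paper resolves both difficulties with a single construction absent from your plan: the set $\Z\subseteq\Y$ of points $x$ such that the walk started at $x$ stays forever strictly to the right of $x$. On $[\Trm=\infty]$ (equivalently $0\in\Z$; one may assume $\delta=\PP^2(0\in\Z)>0$, else the claim is trivial), every point of $\Z\cap\RR_+$ retains one mark forever, so $\Z\cap[0,y]\subseteq\X_{T_y}$. Being translation covariant, $\Z$ is a stationary ergodic point process of density $\delta p>0$; the analogue $R_y$ of $\R_y$ computed on $\Z$ alone is a.s.\ finite precisely because of this positive density, is a stationary ergodic marking of $\Z$, and dominates $\R_y$ because adding points can only decrease the supremum (verified via a ``percolative'' rewriting of $R_y$ as the complement of a union of intervals $[2x,x]$). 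Ergodicity of the marked process $\bigl(\Z,(R_y)_{y\in\Z}\bigr)$ then yields a deterministic $r_0$ with $R_y<r_0$, hence $\R_y<r_0$, for infinitely many $y\in\Z\cap\RR_+$. Your instinct that the crux is to find the right translation-covariant object is correct, but that object is $\Z$ together with the domination $\R_y\le R_y$, not the environment seen from the particle.
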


Let us see how the above claim implies~\eqref{eq:istaken}.

\begin{proof}[Proof of \eqref{eq:istaken}.] 
Let $\F_k=\sigma(X_1,\ldots,X_k)$.
For all $k\geq0$, $A_k:=[X_{k+1}-X_k>\R_{X_k}]\in\F_{k+1}$ and
\begin{align*}
\PP(A_k|\F_k)
	& \geq \PP(\R_{X_k}<r_0<X_{k+1}-X_k\,|\F_k)
=\I_{[\R_{X_k}<r_0]}\PP(r_0<X_{k+1}-X_k)\\
	&=\I_{[\R_{X_k}<r_0]}e^{-r_0}.
\end{align*}
Hence Claim~\ref{cl:r0} gives that $\sum_{k\geq0}\PP(A_k|\F_k)=\infty$ almost surely.
Finally~(\ref{eq:istaken}) follows from the conditional Borel-Cantelli Lemma~\cite[Thm~5.3.2]{durrett-10} and Lemma~\ref{lemma-gap}.
\end{proof}

To complete the proof of Theorem~\ref{thm1}, we prove Claim~\ref{cl:r0} using ergodic arguments.

\begin{proof}
[Proof of Claim~\ref{cl:r0}]
Let $\Z \subseteq \Y$ be the set of points $x\in\Y$ such that, for all $n\geq1$, $S^x_n > x$. 
The event in~(\ref{eq:istaken}) is equivalent to $[0 \in \Z]$.
Let us write
\[\delta = \PP^2(0 \in \Z).\]

If $0\notin\Z$, then note that for all $y\geq0$ large enough, $T_0<T_y$ hence by definition $\R_y=0$, and the conclusion of the claim holds for any $r_0>0$. For that reason, it will be sufficient to prove the claim under the assumption $\delta>0$ and on the event $[0\in\Z]$.  

Thus, let us assume $\delta>0$. 
By definition, $\Z$ is a function of $\omega$ that is translation covariant (meaning that $\Z(\theta_x\omega)=\theta_x\Z(\omega)$ for any translation $\theta_x$). 
Therefore, under $\PP$, $\Z$ is a stationary and ergodic point process in $\RR$. The density of $\Z$ is  $\delta p$ (and in particular $\Z$ is non-empty since $\delta>0$).

For every $y\in\Z$, let us define a quantity $R_y$ similar to $\R_y$ but that will be stationary in space.
Label the points of $\Z\cap(-\infty,y]$ in decreasing order \[\cdots<z_2<z_1<z_0=y,\] and take
\[R_y=\sup\{y-z_j\,:\,y-z_j>2(y-z_{j-1}),\, j\geq 1\}.\]

Most importantly, these random variables are finite: almost surely, $R_y<\infty$ for all $y\in\Z$.
This follows from the fact that the ergodic set $\Z$ has a density $\delta p>0$ almost surely.
Indeed, this density implies that almost surely
\[-z_j\,\equivalent{j\to\infty}\,\frac1{\delta p}j\]
hence, for any $y\in\Z$, $y-z_j<2(y-z_{j-1})$ for large $j$ and thus $R_y<\infty$.  

In addition, as a function of $\Z$, $(R_y)_{y\in\Z}$ is translation covariant, and thus the marked process $\big(\Z,(R_y)_{y\in\Z}\big)$ is stationary and ergodic.
In particular, if we choose $r_0$ large enough so that $\PP^2(R_y<r_0\,|\,0\in\Z)>0$ then, $\PP^2$-almost surely, $R_y<r_0$ for infinitely many points $y\in\Z \cap \RR_+$.

Let us finally argue that on $[0\in\Z]$, for all $y\in\Z\cap\RR_+$, $\R_y \leq R_y$.
Given the previous conclusion, this will readily imply the claim. 

First, we can waive the condition $j\leq n$ on the definition of $\R_y$: 
\[\R_y\leq \sup\{y-z_j\,:\,y-z_j>2(y-z_{j-1}), j\geq 1\}.\]
The difference with $R_y$ then lies in the set of points $z_j$. Let us justify that
\begin{itemize}
\item the set of points $z_j$ considered for $R_y$ is a subset of those considered for $\R_y$;
\item adding points in the definition decreases the supremum. 
\end{itemize}
The first property follows from the fact that points in $\Z\cap\RR_+$ are never fully removed.
Indeed, on the first visit to a point $z$ in this set, the gap on its left is greater than it initially was in $\X$ and the subset on its right is untouched, hence the definition of $\Z$ ensures that the walk then stays on the right and leaves the second mark at $z$ intact.

The second property comes from the following equivalent ``percolative'' definition of $R_y$, which for simplicity we write for $R_0$:
\[
R_0=\min\bigg\{\ell>0 :
[-\ell,0]\cup\bigcup_{x\in\Z\cap(-\infty,0]}[2x,x]=\RR_-\bigg\}=- \inf
\bigg(\RR_-\setminus\bigcup_{x\in\Z\cap(-\infty,0]}[2x,x]\bigg)
.
\]
The equivalence between the definitions follows from the observation that
$X_j<2X_{j-1}$ if and only if $[2X_j,X_j]\cap[2X_{j-1},X_{j-1}]=\emptyset$ and
in that case
$X_j=\inf([2X_j,X_{j-1}]\setminus([2X_j,X_j]\cap[2X_{j-1},X_{j-1}]))$. 

This concludes the proof of the comparison $\R_y<R_y$, and thus of the claim. 
\end{proof}

\begin{proof}
[Proof of Theorem~\ref{thm2}]

We introduce a ``renewal structure'' on $\X\cap\RR_+$. 
Let us momentarily only consider, under $\PP^1$, the subset of positive marks plus exactly one at zero:
\[\omega^+=\omega_{|(0,+\infty)}+\delta_0.\]
The following definitions refer to this distribution of marks. 

For any $x\in\X\cap\RR_+$, let us denote its removal time (or last hitting time) by
\[\mathcal T^R_x(\omega^+)=\min\{n\geq0\,|\,\omega^+_{n+1}(x)=0\},\]
and consider the maximum displacement $\ell(x)$ of $S$ to the right of $x$ before that time, 
in other words
\[\ell(x)=\max_{0\leq n\leq\mathcal T^R_x(\omega^+)}S_n(\omega^+)-x.\]
Here we let a priori $\ell(x)=\infty$ if $\mathcal T^R_x(\omega^+)=\infty$ (which by Theorem~\ref{thm1} actually does almost surely not happen, but we do not need Theorem~\ref{thm1} for Theorem~\ref{thm2}).
In particular $\ell(x)=0$ if $\omega(x)=1$ (the first visit at $x$ already removes this point).
Let us also define $\xi(x)$ to be the next gap after $x+\ell(x)$:
\[\xi(x)=\big(x+\ell(x)\big)'-\big(x+\ell(x)\big)\]
where, for $z\in\X$, $z'$ denotes the next mark on the right of $z$.
Finally let
\[L(x)=\ell(x)+\xi(x), \]
so that, as long as $n\leq \mathcal T^R_x(\omega^+)+1$, $S_n(\omega^+)$ is fully determined by the restriction of $\omega^+$ to $[0,x+L(x)]$, and does actually not depend on whether $\omega^+(x+L(x))=1$ or $2$. 

Note that, since points carry at most two marks, for any $x\in\X\cap\RR_+$, at the hitting time of $x+L(x)$, there is no mark left in $[x,x+L(x)[$.
Indeed, $x$ has been cleared at that time and every piece of the interval $[x,x+\ell(x)[$ has been crossed (at least) twice by the walk, thus removing every mark in it.
Furthermore, even if one mark may possibly have been left at $x+\ell(x)$ at time $\mathcal T^R_x(\omega_+)$, it has to have been removed before hitting the point $x+L(x)$, lying on its right. 

For that reason, if we define recursively the lengths $L_0=L(0)=X_1$ and for all $k\geq0$
\[L_{k+1}=L(L_0+\cdots+L_k),\]
(as long as $L_k<\infty$) we get by induction that for all $k\geq1$, at the hitting time of $L_0+\cdots+L_k$, there is no mark anymore on the left of that point. And therefore, using the previous remark, the random variables~$(L_k)_{k\geq1}$ are i.i.d.

Assume now that $\omega(0)=2$ and $S_1=X_1$. Thus, after the first step, one mark is left at~0, contrary to what happens on $\omega^+$. The first difference with the process on $\omega^+$ will occur when the walk is closer to~$0$ than to the next mark on its right, causing a first return to~$0$. This may happen only in one of three types of situations: (introducing $\ell_{k+1}=\ell(L_0+\cdots+L_k)$ and $\xi_{k+1}=\xi(L_0+\cdots+L_k)$)
\begin{itemize}
\item either on the first hitting time of $L_0+\cdots+L_k$ for some $k$, if $\zeta_{k+1}>L_0+\cdots+L_k$ where $\zeta_{k+1}$ is the first gap after $L_0+\cdots+L_k$ (note that this implies $L_{k+1}>L_0+\cdots+L_k$);
\item or after the return time (if any) to $L_0+\cdots+L_k$ for some $k$, if either the next mark lies at $L_0+\cdots+L_{k+1}$ and $L_{k+1}>L_0+\cdots+L_k$, or if one mark is left at $L_0+\cdots+L_k+\ell_{k+1}$ and $\ell_{k+1}>L_0+\cdots+L_k$ (which implies $L_{k+1}>L_0+\cdots+L_k$);
\item or after the next step after the hitting time of $L_0+\cdots+L_k$ for some $k$, if a mark at $L_0+\cdots+L_k+\ell_{k+1}$ is still present, and if $\xi_{k+1}>L_0+\cdots+L_k+\ell_{k+1}$ (this also implies $L_{k+1}>L_0+\cdots+L_k$).
\end{itemize}
Thus in each case we get that a return to $0$ implies $L_{k+1}>L_0+\cdots+L_k$ for some $k\geq0$. 

Assume, by contradiction, that $\EE[L_1]<\infty$ (hence $L_1<\infty$ a.s.).
Then it follows from the law of large numbers that
\[\PP(\forall k\geq1,\ L_{k+1}<L_1+\cdots+L_k)>0.\]
Therefore, $\PP(\mathcal T_0=\infty)>0$, and by symmetry $\PP(\mathcal T_0=\infty, S_1=X_1)>0$. But this would imply that $\PP(L_1=\infty)>0$ since the environment $\omega^+$ on the right of $X_1$ has same law as the one on the right of $0$ under $\PP$. And thus $\EE[L_1]=\infty$, a contradiction. Therefore, $\EE[L_1]=\infty$. 

To conclude the proof of Theorem~\ref{thm2}, let us argue that $\EE[L_1]=\infty$ implies $\EE^1[\Trm]=\infty$. The event $[\omega(0)=2,S_1=X_1]$ has positive probability $\frac p2$, is independent of $L_1$ and, on this event, $\Trm$ is at least equal to  the number of marks between $X_1$ and $X_1+L_1$. Thus, if we denote $\xi_k=X_{k+1}-X_k$ for all $k\geq0$, these variables are i.i.d.~exponential r.v.~with parameter 1, and on $[\omega(0)=2,S_1=X_1]$,
\[L_1\leq\sum_{k=1}^{\Trm} \xi_k.\]
Hence for any $k>0$,
\begin{align*}
\PP^1(\Trm>k\,|\,\omega(0)=2,S_1=X_1)
& \geq \PP(\xi_1+\cdots+\xi_k< 2k,\,L_1>2k) \\
& \geq \PP(L_1>2k)-\PP(\xi_1+\cdots+\xi_k> 2k). 
\end{align*}
Since $E[\xi_i]=1<2$, the last probability is seen to decrease exponentially (by large deviation principle, or Chernoff's inequality), and since $\EE[L_1]=\infty$ the second to last probability is seen to sum to $+\infty$ with respect to $k\in\NN$. Therefore, summing on both sides with respect to $k\in\NN$ gives
\[\EE^1[\Trm\,|\,\omega(0)=2,S_1=X_1]=\infty,\]
hence the result.
\end{proof}

\section*{Acknowledgments}

L.T. thanks Claude Loverdo for telling him about the papers~\cite{santos-et-al-07,boyer-08}. We thank the MSRI in Berkeley, the DMA in ENS-Paris, Universit\'e Paris XIII, and the MRI at Oberwolfach for their hospitality.
V.S. thanks FIM of ETH Zurich for hospitality and financial support. 
L.T.R. thanks the Brazil-France agreement.
This work was supported by ESF project RGLIS and ANR project MEMEMO2 2010 BLAN 0125.

\renewcommand{\baselinestretch}{1}
\parskip 0pt
\small
\bibliographystyle{bib/rollaalphasiam}
\bibliography{bib/leo}

\end{document}